\newcommand{\received}{Received on 19 December 2008.}
\newcommand{\theaddress}{David Lewis and Thomas Unger,\\
                        School of Mathematical Sciences,\\
                        University College Dublin,\\\
                        Belfield,\
                        Dublin~4, Ireland,\\
                        \ninesl david.lewis@ucd.ie, thomas.unger@ucd.ie}
\newfont{\caps}{cmcsc10}
\newfont{\smallcaps}{cmcsc10 at 8pt}
\newfont{\ninerm}{cmr9}
\newfont{\nineit}{cmti9}
\newfont{\ninesl}{cmsl9}
\newfont{\eightrm}{cmr8}
\newfont{\eightit}{cmti8}
\newfont{\eightsl}{cmsl8}
\newfont{\eightbf}{cmbx8}
\newfont{\sevenrm}{cmr7}
\newfont{\sevenit}{cmti7}
\newcommand{\too}{\longrightarrow}
\newcommand{\mapstoo}{\longmapsto}
\renewcommand{\bar}{\overline}
\newcommand{\bbar}{-}
\DeclareMathOperator{\End}{End}
\newcommand{\x}{\times}
\DeclareMathOperator{\ad}{ad}
\newcommand{\ve}{\varepsilon}
\newcommand{\vf}{\varphi}
\newcommand{\Gr}{\mathrm{Gr}}
\newtheorem{lemma}{Lemma}[section]
\newtheorem{propo}[lemma]{Proposition}
\theoremstyle{definition}
\newtheorem{remark}[lemma]{Remark}
\DeclareFontFamily{OT1}{manual}{}
\DeclareFontShape{OT1}{manual}{m}{n}{ <10> manfnt }{}
\begin{document}
\title{}
\begin{center}
\ \ \
\end{center}
\vspace{1truecm}
\begin{center}
\bf\large Hermitian Morita Theory:\\ a Matrix Approach
\end{center}
\vspace{-1truecm}
\author{David W. Lewis}
\author{Thomas Unger}

\begin{abstract} 
In this  note an explicit matrix description of hermitian Morita theory is presented.
\end{abstract}

\maketitle

\thispagestyle{myheadings} \markboth{\smallcaps%
                      David W. Lewis and Thomas Unger}
         {{\eightit Irish Math.~Soc.~Bulletin {\eightbf62} (2008),
           37--41}\hfill\eightrm}

\section{Introduction}

\noindent
Let $K$ be a field of characteristic different from two and let $A$ be a central simple $K$-algebra equipped with an involution~$*$. 
By a well-known theorem of Wedderburn, $A$ is of the form $M_n(D)$, a full matrix algebra over a division $K$-algebra $D$. 
Furthermore, there exists an involution $\bbar$ on $D$ of the same kind as $*$ such that $*$ and $\bbar$ have the same restriction to~$K$.
Then $*$ is the adjoint involution $\ad_{h_0}$ of some nonsingular $\ve_0$-hermitian form $h_0$ over $(D,\bbar)$,
\[h_0: D^n\x D^n \too D,\]
with $\ve_0=\pm 1$. Thus
\[X^*=\ad_{h_0}(X)=S \bar{X}^t S^{-1},\quad \forall X\in M_n(D),\]
where $S\in GL_n(D)$ is the matrix of $h_0$, so that $\bar{S}^t=\ve_0 S$.

Let $\Gr_\ve(A,*)$ and $W_\ve(A,*)$ denote the Grothendieck group and Witt group of  $\ve$-hermitian forms over $(A,*)$, respectively. 
Hermitian Morita theory furnishes us with  isomorphisms
\[\Gr_{\ve}(A,*) \cong \Gr_{\ve_0 \ve} (D,\bbar)\quad\text{and}\quad W_{\ve}(A,*) \cong W_{\ve_0 \ve} (D,\bbar).\]
These isomorphisms are the result of the following equivalences of categories
\[\xymatrix@1{
\left\{\!\parbox[c]{5.03em}{\flushleft$\ve$-hermitian forms over $(M_n(D), *)$}\!\right\} \ar@{<->}[r]^-{\text{scaling}}&  
\left\{\!\parbox[c]{5.94em}{\flushleft$\ve_0\ve$-hermitian forms over $(M_n(D), \bbar^t)$}\!\right\}\ar@{<->}[rr]^-{\text{Morita}}_-{\text{equivalence}} & &
\left\{\!\parbox[c]{5.94em}{\flushleft $\ve_0\ve$-hermitian forms over $(D, \bbar)$}\!\right\}
}\]
(all forms are assumed to be nonsingular)
which respect isometries, orthogonal sums and hyperbolic forms.

In this note we describe these correspondences explicitly. In particular we give a matrix description of Morita equivalence which does 
not seem to be generally known. Other explicit descriptions can be found in \cite{Lew, Ri1, Ri2}. The subject is often treated in a more 
abstract manner, such as in~\cite{FM} and \cite[Chap.~I, \S9]{Knus}.

\markboth{\smallcaps David W. Lewis and Thomas Unger}
{\smallcaps Hermitian Morita Theory: a Matrix Approach}

\section{Scaling}

\noindent
Let $M$ be a right $M_n(D)$-module and let $h:M\x M\too M_n(D)$ be an $\ve$-hermitian form with respect to $*$, i.e.
\[h(y,x)=\ve h(x,y)^*=\ve S \bar{h(x,y)}^t S^{-1}.\]

\begin{propo} The form
\[S^{-1}h: M\x M\too M_n(D),\ (x,y)\mapstoo S^{-1} h(x,y)\]
is $\ve_0\ve$-hermitian over $(M_n(D), \bbar^t)$.
\end{propo}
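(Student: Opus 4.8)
The plan is to verify directly that $S^{-1}h$ satisfies the two defining properties of an $\ve_0\ve$-hermitian form over $(M_n(D),\bbar^t)$: sesquilinearity with respect to the conjugate-transpose involution $X\mapsto\bar{X}^t$, and the corresponding symmetry relation. Everything will reduce to short matrix computations using only the two identities at our disposal, namely $X^*=S\bar{X}^tS^{-1}$ for all $X\in M_n(D)$ and $\bar{S}^t=\ve_0 S$, together with $\ve_0^2=1$ and the fact that $\bbar^t$ is an involution (hence an anti-automorphism) of $M_n(D)$.

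First I would check sesquilinearity. Since $h$ is sesquilinear with respect to $*$, we have $h(xa,yb)=a^*\,h(x,y)\,b$ for all $a,b\in M_n(D)$, so substituting $a^*=S\bar{a}^tS^{-1}$ gives
\[(S^{-1}h)(xa,yb)=S^{-1}a^*\,h(x,y)\,b=S^{-1}\bigl(S\bar{a}^tS^{-1}\bigr)h(x,y)\,b=\bar{a}^t\,(S^{-1}h)(x,y)\,b.\]
The $S$'s cancel, so $S^{-1}h$ is sesquilinear with respect to $\bbar^t$.

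Next I would check the symmetry relation. Writing $H=h(x,y)$, the $\ve$-hermitian symmetry of $h$ reads $h(y,x)=\ve\,h(x,y)^*=\ve\,S\bar{H}^tS^{-1}$, hence $(S^{-1}h)(y,x)=S^{-1}h(y,x)=\ve\,\bar{H}^t S^{-1}$. On the other hand, applying $\bbar^t$ to $(S^{-1}h)(x,y)=S^{-1}H$ and using that $\bbar^t$ reverses products,
\[\bar{(S^{-1}H)}^t=\bar{H}^t\,\bar{(S^{-1})}^t=\bar{H}^t\,(\bar{S}^t)^{-1}=\bar{H}^t\,(\ve_0 S)^{-1}=\ve_0\,\bar{H}^tS^{-1},\]
where I used $\bar{S}^t=\ve_0 S$ and $\ve_0^{-1}=\ve_0$. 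Multiplying by $\ve_0\ve$ and invoking $\ve_0^2=1$ gives $\ve_0\ve\,\bar{(S^{-1}H)}^t=\ve\,\bar{H}^tS^{-1}=(S^{-1}h)(y,x)$, which is precisely the $\ve_0\ve$-hermitian symmetry over $(M_n(D),\bbar^t)$. Nonsingularity is not affected, since $S^{-1}\in GL_n(D)$ and the adjoint of $S^{-1}h$ differs from that of $h$ only by composition with a module automorphism.

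I do not expect a genuine obstacle here: the whole argument is bookkeeping with the two displayed identities. The only points requiring care are the order-reversal when forming the conjugate transpose of a product and the tracking of the signs $\ve_0,\ve\in\{\pm1\}$; the second display is exactly the place where the factor $\ve_0$ coming from $\bar{S}^t=\ve_0 S$ is absorbed, making the two sides of the symmetry relation agree.
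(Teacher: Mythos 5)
Your proof is correct and follows essentially the same route as the paper's: verify sesquilinearity by substituting $\alpha^*=S\bar{\alpha}^tS^{-1}$ and cancelling the $S$'s, then verify the symmetry relation using $\bar{S}^t=\ve_0 S$ to convert $S^{-1}$ into $\ve_0\,\bar{(S^{-1})}^t$. The only cosmetic difference is that you compute both sides of the symmetry identity separately and compare, whereas the paper transforms one side into the other in a single chain; the content is identical.
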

\begin{proof}  Sesquilinearity of $S^{-1}h$ with respect to $\bbar^t$ follows easily from sesquilinearity of $h$ with respect to $*$:
\begin{equation*}
\begin{split}
(S^{-1}h)(x\alpha,y) &=S^{-1}h(x\alpha,y)=S^{-1} \alpha^* h(x,y)\\
                                         &=S^{-1} S\bar{\alpha}^tS^{-1}h(x,y)=\bar{\alpha}^tS^{-1} h(x,y)
\end{split}
\end{equation*}
for any $\alpha \in M_n(D)$ and any $x,y\in M$.

Furthermore, using the fact that $\bar{S}^t=\ve_0 S$, we get
\begin{align*}
(S^{-1}h)(y,x) &=S^{-1} h(y,x)\\
&= S^{-1} \ve S \bar{h(x,y)}^t S^{-1}\\
&=  \ve  \bar{h(x,y)}^t S^{-1}\\
&=\ve \ve_0 \bar{h(x,y)}^t \bar{(S^{-1} )}^t\\
&=\ve \ve_0 \bar{(S^{-1}h)(x,y)}^t
\end{align*}
for any $x,y\in M$.
\end{proof}

\begin{remark} By the first part of the proof, scaling of a sesquilinear form $h$  (rather than an $\ve$-hermitian form $h$) 
with respect to $*$ results in a sesquilinear form $S^{-1}h$ with respect to $\bbar^t$.
\end{remark}

\begin{remark} The matrix $S$ is not determined uniquely, but only up to  scalar multiplication by $\lambda \in K$, since 
$\lambda S$ and $S$ give the same involution $\ad_{h_0}$. Hence the scaling correspondence is not canonical.
\end{remark}

\section{Morita Equivalence}

\noindent
Every module over $M_n(D)\cong \End_D (D^n)$ is a direct sum of simple modules, namely copies of $D^n$. 
Let $(D^n)^k$ be such a module. We identify $(D^n)^k$ with $D^{k\x n}$, the $k\x n$-matrices over $D$. We view each row of a 
$k\x n$-matrix over $D$ as an element of $D^n$. Note that $M_n(D)$ acts on $D^{k\x n}$ on the right.

Now let
\[h:D^{k\x n} \x D^{k\x n}\too M_n(D)\]
be an $\ve$-hermitian form over $(M_n(D), \bbar^t)$.

\begin{propo}\label{prop2} There exists an $\ve$-hermitian $k\x k$-matrix $B\in M_k(D)$ such that
\begin{equation}\label{eq2}
h(x,y)=\bar{x}^t By,\ \forall x,y\in D^{k\x n}.
\end{equation}
\end{propo}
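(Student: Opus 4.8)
The plan is to recover the Gram matrix $B$ from $h$ by restricting along the idempotent $e_{11}\in M_n(D)$ (the matrix unit with $1$ in position $(1,1)$ and zeros elsewhere) --- this is the matrix incarnation of the Morita functor --- and then to propagate formula \eqref{eq2} by sesquilinearity, in the spirit of the classical fact that a sesquilinear form on a free module is given by its Gram matrix.

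First I would fix a generating set of the right $M_n(D)$-module $D^{k\x n}$: for $1\le i\le k$ let $u_i\in D^{k\x n}$ be the matrix whose $i$-th row is $(1,0,\dots,0)\in D^n$ and whose other rows vanish. Since $D^n$ is cyclic over $M_n(D)$ (every row vector is the top row of some matrix), each $x\in D^{k\x n}$ can be written as $x=\sum_{i=1}^k u_i\alpha_i$ with $\alpha_i\in M_n(D)$; concretely one takes $\alpha_i$ to have the $i$-th row of $x$ as its first row. The key observation is that $u_ie_{11}=u_i$ for every $i$ and that $\bar{e_{11}}^t=e_{11}$; hence, by sesquilinearity of $h$ with respect to $\bbar^t$,
\[h(u_i,u_j)=h(u_ie_{11},u_je_{11})=e_{11}\,h(u_i,u_j)\,e_{11}.\]
Since $e_{11}Me_{11}$ is the $(1,1)$-entry of $M$ times $e_{11}$, this forces $h(u_i,u_j)=B_{ij}\,e_{11}$ for unique $B_{ij}\in D$; set $B=(B_{ij})\in M_k(D)$. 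A short matrix-unit computation shows $\bar{u_i}^tBu_j=B_{ij}e_{11}$ as well, so \eqref{eq2} holds on the $u_i$. Writing $x=\sum_i u_i\alpha_i$ and $y=\sum_j u_j\beta_j$ and using sesquilinearity together with the anti-automorphism property $\overline{XY}^t=\bar{Y}^t\bar{X}^t$ then yields $h(x,y)=\bar{x}^tBy$ for all $x,y\in D^{k\x n}$.

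It remains to see that $B$ is $\ve$-hermitian, i.e.\ $\bar{B}^t=\ve B$. Applying $h(y,x)=\ve\,\bar{h(x,y)}^t$ with $x=u_j$ and $y=u_i$, the left-hand side is $B_{ji}e_{11}$ while the right-hand side is $\ve\,\overline{B_{ij}}\,e_{11}$, so $B_{ji}=\ve\,\overline{B_{ij}}$, which is precisely $\bar{B}^t=\ve B$. (Equivalently, one may substitute $h(x,y)=\bar{x}^tBy$ into the hermitian identity and compare the resulting sesquilinear expressions.)

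I expect the only genuine subtlety to be the step $u_ie_{11}=u_i$ together with the identification of $e_{11}M_n(D)e_{11}$ with $D$ --- this is where the Morita equivalence is hiding; everything afterwards is bookkeeping. I would also take care to match the sesquilinearity conventions used in \S2 (conjugate-linear on the left in the first variable, linear on the right in the second), so that the product-reversing anti-automorphism $X\mapsto\bar{X}^t$ is invoked on the correct side.
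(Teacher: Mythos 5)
Your proof is correct and takes essentially the same route as the paper's: both recover the entries of $B$ by compressing $h$ on matrix-unit generators via idempotents (your $e_{11}\,h(u_i,u_j)\,e_{11}$ versus the paper's $E_{ii}\,h(e_{ii},e_{jj})\,E_{jj}$) and establish $\bar{B}^t=\ve B$ by the same evaluation of the hermitian identity on those generators. The only real difference is that you work with the $k$ generators $u_i=e_{i1}$ and a single idempotent, extending to all of $D^{k\x n}$ by explicit sesquilinearity, whereas the paper evaluates $h$ on all $kn$ generators $e_{if}$; your variant is slightly leaner and avoids the diagonal units $e_{ii}$, which strictly speaking only exist when $k\leq n$.
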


\begin{proof} Let $B=(b_{ij})$. We will determine the entries $b_{ij}$. Let $e_{ij}\in D^{k\x n}$,  $e'_{ij}\in D^{n\x k}$ and 
$E_{ij}\in M_n(D)$ respectively denote the $k\x n$-matrix, the $n\x k$-matrix and the $n\x n$-matrix with $1$ in the $(i,j)$-th position and 
zeroes everywhere else. One can easily verify that
\begin{equation}\label{eq1}
e_{if}E_{f\ell}=e_{i\ell},
\end{equation}
where $1\leq i\leq k$ and $1\leq f,\ell\leq n$. Also note that if $C\in M_n(D)$, then computing the product $E_{ij}C$ picks the $j$-th row 
of $C$ and puts it in row $i$ while making all  other entries zero. Similarly,
computing the product $CE_{ij}$ picks the $i$-th column of $C$ and puts it in column $j$ while making all  other entries zero. 
The matrices $e_{ij}$ and $e'_{ij}$ behave in a similar fashion.

The matrices $\{e_{ij} \mid 1\leq i \leq k,\ 1\leq j\leq n\}$ generate $D^{k\x n}$ as a right $M_n(D)$-module. Thus it suffices to 
compute $h(e_{if}, e_{jg})$ where $1\leq i,j\leq k$ and $1\leq f,g\leq n$. Let us first compute $h(e_{ii}, e_{jj})$:
\begin{align*}
h(e_{ii}, e_{jj}) &= h(e_{ii}E_{ii}, e_{jj} E_{jj})\\
&= E_{ii} h(e_{ii}, e_{jj}) E_{jj}\\
&= m_{ij} E_{ij},
\end{align*}
where $m_{ij}$ is the $(i,j)$-th entry of $h(e_{ii}, e_{jj})\in M_n(D)$. In other words, the matrix $h(e_{ii}, e_{jj})$ has only one non-zero entry, 
namely $m_{ij}$ in position $(i,j)$.

Next, let us compute $h(e_{if}, e_{jg})$. We will use the fact that
\[e_{if}=e_{ii} E_{if},\]
where $1\leq i\leq k$ and $1\leq f\leq n$, which follows from \eqref{eq1}.  We get
\begin{align*}
h(e_{if}, e_{jg}) &= h(e_{ii} E_{if}, e_{jj} E_{jg})\\
&= E_{fi} h(e_{ii}, e_{jj}) E_{jg}\\
&=\bigl( h(e_{ii}, e_{jj})\bigr)_{ij} E_{fg}\\
&=m_{ij}E_{fg}.
\end{align*}
Let $b_{ij}=m_{ij}$ where $1\leq i, j\leq k$. We have
\begin{align*}
\bar{e_{if}}^t B e_{jg}&= e'_{fi} B e_{jg}\\
&=b_{ij} E_{fg}\\
&=m_{ij} E_{fg}.
\end{align*}
Therefore, $h(e_{if}, e_{jg})=\bar{e_{if}}^t B e_{jg}$ where $1\leq i,j\leq k$ and $1\leq f,g\leq n$, which establishes \eqref{eq2}.

Finally,
\[m_{ji}E_{ji}=h(e_{jj}, e_{ii})=\ve \bar{h(e_{ii}, e_{jj})}^t=\ve \bar{m_{ij}}E_{ji},\  \text{for }1\leq i,j\leq k,\]
which implies
$m_{ji}=\ve \bar{m_{ij}}$, for $1\leq i,j\leq k$. In other words,  $\bar{m_{ji}}=\ve m_{ij}$, for $1\leq i,j\leq k$, so that $\bar{B}^t=\ve B$, which finishes the proof.
\end{proof}

So, given an $\ve$-hermitian form $h$ over $(M_n(D), \bbar^t)$, we have obtained an $\ve$-hermitian form over $(D,\bbar)$ with matrix 
$B$ as in Proposition~\ref{prop2}. Conversely, given an $\ve$-hermitian form
\[\vf: D^k\x D^k \too D,\]
represented by the matrix $B$ (i.e. $B=\bigl(\vf(e_i, e_j)\bigr)$ for a $D$-basis $\{e_i\}$ of $D^k$), we define
\[h: D^{k\x n} \x D^{k\x n}\too M_n(D)\]
by
\[h(x,y):=\bar{x}^t B y,\  \forall x,y \in D^{k\x n},\]
which gives an $\ve$-hermitian form over $(M_n(D),\bbar^t)$.

\begin{remark} The correspondence $h\leftrightarrow \vf$ already works for forms that are just sesquilinear, without assuming any 
hermitian symmetry. Since scaling also preserves sesquilinearity, as remarked earlier, we conclude that the category equivalences 
of \S1 already hold for sesquilinear forms over $(M_n(D),*)$, $(M_n(D),\bbar^t)$ and $(D,\bbar)$, respectively.
\end{remark}

\centerline{  }
\vskip.34truecm
\parindent=0pt\ninerm
\noindent
\theaddress
\vskip.3truecm
\centerline{\nineit\received}

\end{document}